\theoremstyle{plain}
\newtheorem{theorem}{Theorem}
\newtheorem{corollary}[theorem]{Corollary}
\newtheorem{lemma}[theorem]{Lemma}
\newtheorem{definition}[theorem]{Definition}
\theoremstyle{remark}
\newtheorem{remark}{Remark}
\newtheorem{example}{Example}
\begin{document}

\date{}

\title{On $T-$locally compact spaces}

\author{Aliakbar Alijani}

\address{Mollasadra Technical and Vocational College\\
Technical and Vocational University\\
Ramsar, Iran}

\email{alijanialiakbar@gmail.com}

\thanks{}

\subjclass{54D45,54D10}

\begin{abstract}
The aim of this paper is to introduce and give preliminary investigation of $T-$locally compact spaces. Locally compact and $T-$locally compact are independent of each other. Every Hausdorff, locally compact space is $T-$locally compact. $T-$locally compact is a topological property. $T-$locally compact is not preserved by the product topology.

\end{abstract}

\maketitle


\newcommand\sfrac[2]{{#1/#2}}

\newcommand\cont{\operatorname{cont}}
\newcommand\diff{\operatorname{diff}}


\section{Introduction}
By a space, we mean a topological space. A space $X$ will be called $T-$locally compact if for every open set $U$ containing $x$, there exists an open subset $V$ containing $x$ such that $\partial V$ is compact and $V\subseteq U$. The space $X$ is called $T-$locally compact if $X$ is $T-$locally compact at each of points. Locally compact and $T-$locally compact are independent of each other (Examples \ref{2} and Example \ref{3}). We show that a Hausdorff, locally compact space is $T-$locally compact (Lemma \ref{4}). Open or closed subspace of a $T-$locally compact space is $T-$locally compact (Lemma \ref{10} and Lemma \ref{6}). We show that $T-$locally compact is a topological property (Theorem \ref{9}). The product of two $T-$locally compact spaces need not to be $T-$locally compact (Example \ref{7}). We give some conditions such that the product of two $T-$locally compact spaces is $T-$locally compact( Lemma \ref{11} and Theorem \ref{8}).

Throughout, $cl A$, $int A$ and $\partial A$ will denote the closure, the interior and the boundary of a set $A$ respectively. Assume that $I$ be a non-empty index set and for every $i\in I$, $X_{i}$ be a space. We denote by $\prod_{i\in I}X_{i}$, the cartesian product of $X_{i}$ with the product topology. For more information on topological spaces, see \cite{B}.


\section{$T-$locally compact space}
In this section, we introduce the concept and study some properties of $T-$locally compact space.

\begin{definition}
A space $X$ is called $T-$locally compact at $x$ if for every open set $U$ containing $x$, there exists an open subset $V$ containing $x$ such that $\partial V$ is compact and $V\subseteq U$. The space $X$ is called $T-$locally compact if $X$ is $T-$locally compact at each of points.
\end{definition}

\begin{example}
Let $\Bbb R$ be the reals with the usual topology. Assume that $x\in U$ for some open subset $U$ of $\Bbb R$. Then, $x\in (a,b)$ for some real numbers $a$ and $b$ which $(a,b)\subseteq U$. It is clear that $\partial(a,b)=\{a,b\}$ is compact in $\Bbb R$. So, $\Bbb R$ is $T-$ locally compact. Also, it is clear that every discrete space is $T-$locally compact.
\end{example}

\begin{definition}
A subset $A$ of a space $X$ is called nowhere dense if $X-cl A$ is dense.
\end{definition}

\begin{lemma}\label{1}
The boundary of an open or closed subsets of a space is nowhere dense.
\end{lemma}

\begin{proof}
It is clear.
\end{proof}

Locally compact and $T-$locally compact are independent of each other. See the examples \ref{2} and \ref{3}.

\begin{example}\label{2}
Consider $\Bbb Q$ as a subspace of $\Bbb R$ (with the usual topology) and $U$, be an open subset of $\Bbb Q$. We know that the only compact sets in $\Bbb Q$ are nowhere dense. Hence, by Lemma \ref{1}, $\partial U$ is compact. So, $\Bbb Q$ is $T-$locally compact. But, $\Bbb Q$ is not locally compact.
\end{example}

\begin{example}\label{3}
Let $X$ be an infinite set and $p\in X$. Define $\tau={\phi}\bigcup \{U;p\in U\}$. Then, $\tau$ is a topology on $X$. It is clear that $(X,\tau)$ is not Hausdorff, locally compact space. Since $\partial \{p\}=X-\{p\}$ is not compact, $(X,\tau)$ is not a $T-$locally compact space.
\end{example}

\begin{lemma} \label{4}
A Hausdorff, locally compact space is a $T-$locally compact space.
\end{lemma}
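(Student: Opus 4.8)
The plan is to exploit the standard fact that in a Hausdorff locally compact space every point has a neighborhood base of open sets with compact closure. So first, given $x \in X$ and an open set $U$ containing $x$, I would invoke local compactness to find an open set $W$ with $x \in W$ and $cl W$ compact. Intersecting with $U$ does not immediately help (the closure of $W \cap U$ need not sit inside $U$), so the real work is to shrink $W$ so that its closure lands inside $U$; this is where Hausdorffness enters, since compact subsets of Hausdorff spaces are closed and one can separate a compact set from a point by disjoint open sets.

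Concretely, the key step I expect to carry out is: the set $K = cl W \setminus U$ is a closed subset of the compact set $cl W$, hence compact, and it misses $x$. Using that $X$ is Hausdorff (so that we can separate the compact set $K$ from the point $x$), choose disjoint open sets $G \supseteq K$ and $H \ni x$. Then set $V = W \cap H$. One checks $x \in V \subseteq W$, so $cl V \subseteq cl W$ is compact, and $cl V$ avoids $K$ (since $cl V \subseteq cl H \subseteq X \setminus G \subseteq X \setminus K$, or more carefully $V \cap G = \emptyset$ forces $cl V \cap G = \emptyset$ hence $cl V \cap K = \emptyset$), so $cl V \subseteq U$. In particular $V \subseteq cl V \subseteq U$.

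It remains to deduce that $\partial V$ is compact: since $\partial V \subseteq cl V$ and $cl V$ is compact, and $\partial V$ is closed, $\partial V$ is a closed subset of a compact set, hence compact. This gives exactly the condition in the definition of $T-$locally compact at $x$, and since $x$ was arbitrary, $X$ is $T-$locally compact.

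The main obstacle is the separation step: one must be a little careful that the shrunk neighborhood $V$ has its \emph{closure} (not merely itself) inside $U$, which is what makes $\partial V \subseteq cl V$ compact; this is precisely the place where both hypotheses — Hausdorff and locally compact — are genuinely used, and it is the classical argument that Hausdorff locally compact spaces are regular (indeed locally compact Hausdorff spaces admit neighborhood bases of open sets with compact closure contained in any given open neighborhood). Everything else is routine point-set bookkeeping about closures, interiors, and boundaries.
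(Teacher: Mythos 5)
Your proof is correct and ends exactly as the paper's does: $\partial V\subseteq cl V$ is a closed subset of a compact set, hence compact, so $V$ witnesses $T-$local compactness at $x$. The only difference is that the paper simply invokes the standard fact that a locally compact Hausdorff space has at each point a basis of open sets $V\subseteq U$ with $cl V$ compact, whereas you re-derive a stronger form of it; in fact your separation step is unnecessary here, because the definition only requires $V\subseteq U$ (not $cl V\subseteq U$), so once you have an open $W\ni x$ with $cl W$ compact you may take $V=W\cap U$ directly: it is open, contains $x$, lies in $U$, and $cl V\subseteq cl W$ is compact — Hausdorffness is really only needed to produce such a $W$ from a compact neighborhood in the first place.
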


\begin{proof}
Let $x\in X$ and $U$ be an open set containing $x$. Since $X$ is Hausdorff and locally compact, there is an open set $V$ containing $x$ such that $cl V$ is compact and $V\subseteq U$. It is clear that $\partial V$ is compact as a closed subset of compact set $cl V$.
\end{proof}

\begin{lemma} \label{5}
Every compact space $X$ is $T-$locally compact, even though $X$ is not Hausdorff.
\end{lemma}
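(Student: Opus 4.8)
The plan is to reduce $T$-local compactness in this setting to the single standard fact that a closed subset of a compact space is compact, and to observe that this fact needs no separation axiom. Concretely, I would fix an arbitrary point $x\in X$ and an arbitrary open set $U$ with $x\in U$, and then simply take $V=U$ as the witnessing open set in the definition.

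With this choice the containments $x\in V\subseteq U$ are immediate. It remains to check that $\partial V$ is compact. Since $V=U$ is open, $int V=V$, so $\partial V=cl V\setminus int V=cl U\setminus U$, which is a closed subset of $X$. Next I would invoke that a closed subset of a compact space is compact; applying this to the closed set $\partial V$ inside the compact space $X$ gives that $\partial V$ is compact. This verifies that $X$ is $T$-locally compact at $x$, and as $x$ was arbitrary, $X$ is $T$-locally compact.

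There is no real obstacle in the argument; the only subtlety worth emphasizing — and the reason the lemma can drop the Hausdorff hypothesis — is that the implication \emph{closed $\Rightarrow$ compact} for subsets of a compact space holds in complete generality, whereas it is the reverse implication \emph{compact $\Rightarrow$ closed} that would require the space to be Hausdorff. One may also remark that this is consistent with Example \ref{3}, whose space is not compact (the cover by the two-point open sets $\{p,y\}$ has no finite subcover), so no contradiction arises with the failure of $T$-local compactness there.
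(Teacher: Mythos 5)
Your proof is correct and is essentially the paper's argument: take $V=U$ and observe that $\partial U$, being closed in the compact space $X$, is compact — a fact requiring no separation axiom. You merely spell out the details (and the aside on Example \ref{3}) that the paper's one-line proof leaves implicit.
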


\begin{proof}
Let $X$ be a compact space. Then, for every open subset $U$ of $X$, $\partial U$ is compact and proof is complete.
\end{proof}

\begin{lemma}\label{10}
An open subspace of a $T-$locally compact space is $T-$locally compact.
\end{lemma}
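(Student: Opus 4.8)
The plan is to deduce $T$-local compactness of the open subspace $Y$ from that of $X$ by lifting a neighbourhood to $X$, applying the hypothesis there, and reading the outcome back in $Y$. First I would fix a point $x\in Y$ and an open set $U$ of $Y$ containing $x$. Since $Y$ is open in $X$, the set $U$ is open in $X$ too, hence an open neighbourhood of $x$ in $X$. Applying the assumed $T$-local compactness of $X$ at $x$ to the open set $U$, I obtain an open set $V$ of $X$ with $x\in V\subseteq U$ and $\partial_X V$ compact. Because $V\subseteq U\subseteq Y$, this $V$ is open in $Y$ as well and still lies in $U$, so it is the candidate neighbourhood required by the definition relative to $Y$; all that is left is to check that its boundary in $Y$ is compact.

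The main computation is to compare the boundary of $V$ in $Y$ with its boundary in $X$. Using that closure in a subspace satisfies $cl_Y V = cl_X V\cap Y$, and that $V$, being open, equals its own interior in either space, I would obtain
\[
\partial_Y V = cl_Y V\setminus V = (cl_X V\cap Y)\setminus V = \partial_X V\cap Y \subseteq \partial_X V .
\]
So $\partial_Y V$ is a subset, closed in $Y$, of the compact set $\partial_X V$, and the remaining task is to conclude that $\partial_Y V$ is itself compact. Once that is done, $V$ witnesses $T$-local compactness of $Y$ at $x$; as $x\in Y$ was arbitrary, $Y$ is $T$-locally compact.

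The step I expect to be the real obstacle is precisely this last one: passing from the fact that $\partial_Y V$ is contained in the compact set $\partial_X V$ to the conclusion that $\partial_Y V$ is itself compact, since the intersection of a compact set with an open set need not be compact in general. In the cleanest situation $cl_X V\subseteq Y$, so that $\partial_Y V = \partial_X V$ and compactness is immediate; in general one must verify that the part of $cl_X V$ lying outside $Y$ does not prevent $\partial_X V\cap Y$ from being compact, and it is here that the openness of $Y$ has to be used with care.
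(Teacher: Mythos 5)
Your reduction is the same one the paper uses, and the obstacle you flag at the end is a genuine gap --- in fact it cannot be closed. The set $\partial_Y V=\partial_X V\cap Y$ is a relatively open subset of the compact set $\partial_X V$ (it is closed in $Y$, as you note, but $Y$ need not be compact, so that observation buys nothing), and relatively open subsets of compact sets need not be compact. Concretely, the step fails in the following example, which also shows the lemma is false as stated. Let $Z$ be an infinite set with a distinguished point $p$ and the particular point topology $\tau=\{\emptyset\}\cup\{U\subseteq Z: p\in U\}$; this is Example \ref{3}, and it is not $T$-locally compact at $p$, since the only nonempty open subset of the neighbourhood $\{p\}$ is $\{p\}$ itself and $\partial\{p\}=Z\setminus\{p\}$ is infinite and discrete. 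Now put $W=Z\cup\{\omega\}$ with open sets $\emptyset$, the members of $\tau$, and $W$ itself. Then $W$ is compact (the only open set containing $\omega$ is $W$ itself), hence $T$-locally compact by Lemma \ref{5}, and $Z$ is an open subspace of $W$ carrying exactly its original topology --- yet $Z$ is not $T$-locally compact. Here $\partial_W\{p\}=W\setminus\{p\}$ is compact (it contains $\omega$), while $\partial_Z\{p\}=\partial_W\{p\}\cap Z=Z\setminus\{p\}$ is not: this is precisely the passage from $\partial_X V$ to $\partial_X V\cap Y$ that you were worried about.

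You should also know that the paper's own proof is no better: it takes the $V$ produced by $T$-local compactness of $X$ and immediately declares $Y$ to be $T$-locally compact, without ever computing the boundary of $V$ relative to $Y$; it silently identifies $\partial_Y V$ with $\partial_X V$. So your proposal is not missing a trick that the paper supplies --- the statement needs an additional hypothesis. For instance, in the Hausdorff locally compact case one can shrink $V$ so that $cl_X V$ is compact and contained in $Y$, which is exactly the ``cleanest situation'' you describe: then $\partial_Y V=\partial_X V$ and the argument goes through (alternatively, one just applies Lemma \ref{4} to the open subspace directly).
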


\begin{proof}
Let $X$ be a $T-$locally compact space and $Y$, an open subspace of $X$. Let $y\in Y$ and $U$ be an open set in $Y$ containing $y$. Then, $U$ is open in $X$. Since $X$ is $T-$locally compact, there is an open set $V$ in $X$ containing $y$ such that $\partial V$ is compact and $ V\subseteq U$. So, $Y$ os $T-$locally compact.
\end{proof}

\begin{remark}\label{12}
Let $Y$ be a closed subspace of $X$ and $U$, an open subset in $X$. Then,
\begin{align*}
 \partial_{Y} (U\cap Y)&=cl_{Y}(U\cap Y)-int_{Y}(U\cap Y)\\ \nonumber
 &=cl U\cap Y-U\cap Y\\\nonumber
 &=\partial U\cap Y
\end{align*}
\end{remark}

\begin{lemma} \label{6}
A closed subspace of a $T-$locally compact space is $T-$locally compact.
\end{lemma}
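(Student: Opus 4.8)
The plan is to mirror the proof of Lemma \ref{10} for open subspaces, but with Remark \ref{12} supplying the needed identification of boundaries. First I would fix a point $y\in Y$ together with an open set $W$ of $Y$ containing $y$. By definition of the subspace topology, $W=U\cap Y$ for some open subset $U$ of $X$, and necessarily $y\in U$. This moves the problem into the ambient space $X$.

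Next I would apply the hypothesis that $X$ is $T-$locally compact at $y$: there exists an open subset $V$ of $X$ with $y\in V$, $V\subseteq U$, and $\partial V$ compact. Passing to the subspace, $V\cap Y$ is open in $Y$, contains $y$, and satisfies $V\cap Y\subseteq U\cap Y=W$. So $V\cap Y$ is a candidate neighbourhood of $y$ inside $W$; it only remains to control its boundary relative to $Y$.

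Here is where Remark \ref{12} does the work: since $Y$ is closed in $X$ and $V$ is open in $X$, we get $\partial_{Y}(V\cap Y)=\partial V\cap Y$. Because $Y$ is closed in $X$, the set $\partial V\cap Y$ is a closed subset of $\partial V$, hence compact, being a closed subset of the compact space $\partial V$. Thus $V\cap Y$ witnesses $T-$local compactness of $Y$ at $y$, and since $y\in Y$ was arbitrary, $Y$ is $T-$locally compact. I do not expect a real obstacle; the only delicate point is computing the boundary in the correct topology, which Remark \ref{12} handles, after which the argument is just the standard fact that closed subsets of compact spaces are compact.
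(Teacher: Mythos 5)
Your proposal is correct and follows essentially the same route as the paper's own proof: pull back the open set to an open $U$ in $X$, apply $T$-local compactness of $X$ to get $V$, and use Remark \ref{12} to identify $\partial_{Y}(V\cap Y)$ with $\partial V\cap Y$, which is compact. The only difference is cosmetic (your $U$ and $W$ are the paper's $W$ and $U$), and you add the helpful explicit justification that $\partial V\cap Y$ is compact as a closed subset of the compact set $\partial V$, which the paper leaves implicit.
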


\begin{proof}
Let $X$ be a $T-$locally compact space and $Y$, a closed subspace of $X$. Let $y\in Y$ and $U$ be an open set in $Y$ containing $y$. Then, there exists an open set $W$ in $X$ such that $U=W\cap Y$. Since $X$ is $T-$locally compact, there exists an open set $V$ containing $y$ such that $\partial V$ is compact and $V\subseteq W$. By Remark \ref{12}, $\partial_{Y} (V\cap Y)=\partial V\cap Y$ which is compact. Also, $V\cap Y\subseteq U$. So, $Y$ is $T-$locally compact.
\end{proof}

\begin{theorem}\label{9}
$T-$locally compact is a topological property.
\end{theorem}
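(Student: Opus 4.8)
The plan is to show that $T$-local compactness is preserved by homeomorphisms. Let $f \colon X \to Y$ be a homeomorphism and suppose $X$ is $T$-locally compact; I will show $Y$ is $T$-locally compact at an arbitrary point $y \in Y$. Fix an open set $U$ in $Y$ with $y \in U$, and set $x = f^{-1}(y)$. Since $f$ is continuous, $f^{-1}(U)$ is open in $X$ and contains $x$, so by $T$-local compactness of $X$ there is an open set $V$ in $X$ with $x \in V$, $\partial V$ compact, and $V \subseteq f^{-1}(U)$. The natural candidate in $Y$ is $f(V)$: it is open because $f$ is an open map (being a homeomorphism), it contains $y = f(x)$, and $f(V) \subseteq f(f^{-1}(U)) \subseteq U$.

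The key step, and the only one requiring a small argument, is to check that $\partial(f(V)) = f(\partial V)$, from which compactness of $\partial(f(V))$ follows immediately since the continuous image of the compact set $\partial V$ is compact. Because $f$ is a homeomorphism, it commutes with closure and interior: $f(cl\, A) = cl\, f(A)$ and $f(int\, A) = int\, f(A)$ for every $A \subseteq X$. Hence
\begin{align*}
\partial(f(V)) &= cl\, f(V) - int\, f(V) \\
&= f(cl\, V) - f(int\, V) \\
&= f(cl\, V - int\, V) \\
&= f(\partial V),
\end{align*}
where the step $f(cl\, V) - f(int\, V) = f(cl\, V - int\, V)$ uses that $f$ is injective and $int\, V \subseteq cl\, V$.

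I do not anticipate any real obstacle here; the statement is essentially immediate once one records that homeomorphisms preserve boundaries. The only thing to be careful about is to invoke both directions of the homeomorphism — continuity of $f$ to pull back $U$, and openness of $f$ (equivalently, continuity of $f^{-1}$) to push forward $V$ and to get the boundary identity. Since $T$-local compactness at every point of $X$ transfers to every point of $Y$ by this argument, and the roles of $X$ and $Y$ are symmetric, $Y$ is $T$-locally compact, completing the proof that $T$-local compactness is a topological property.
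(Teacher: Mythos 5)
Your proposal is correct and follows essentially the same route as the paper: pull back the open set by continuity, apply $T$-local compactness in $X$, and push the resulting neighborhood forward, controlling its boundary via the fact that a homeomorphism commutes with closure. The only cosmetic difference is that you establish the exact identity $\partial(f(V)) = f(\partial V)$, whereas the paper settles for the inclusion $\partial f(U) \subseteq f(\partial U)$, which already suffices since $\partial f(U)$ is closed.
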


\begin{proof}
Let $X$ and $Y$ be two spaces and $f:X\to Y$, a homomorphism. Let $X$ be a $T-$locally compact space. We show that $Y$ is $T-$locally compact. Let $y\in Y$ and $V$ be an open subset of $Y$ containing $y$. Then, $f(x)=y$ for some $x\in X$ and $x\in f^{-1}(V)$ is an open subset of $X$. There exists an open subset $U$ of $X$ containing $x$ such that $U\subseteq f^{-1}(V)$ and $\partial U$ is compact. Since $f$ is open, $f(U)$ is open in $Y$ and is contained in $V$. Also, $$\partial f(U)=cl f(U)-f(U)=f(cl U)-f(U)\subseteq f(\partial U)$$
So, $\partial f(U)$ is compact and $Y$ is $T-$locally compact.
\end{proof}

If $X$ and $Y$ be two $T-$locally compact spaces, then, $X\times Y$ need not to be $T-$locally compact. See the Example \ref{7}.

\begin{example}\label{7}
Let $\Bbb R$ be the reals with the usual topology and $\Bbb Q$, the rationales with the subspace topology. We show that $\Bbb Q\times \Bbb R$ is not $T-$locally compact. Let $N=(0,1)^{2}\cap (\Bbb Q\times \Bbb R)$ ($(0,1)^{2}=(0,1)\times (0,1)$). We claim that the boundary of every nonempty open subset of $N$ is not compact. Let $U\subseteq N$ be an open set. First, we show that $\pi_{1} (U)\subseteq\pi_{1}(\partial U)$. Let $x\in \pi_{1} U$. Assume to contrary, $x\notin \pi_{1}(\partial U)$. Then, $\{x\}\times \Bbb R\subseteq U\subseteq N$. So, $\pi_{2}(\{x\}\times \Bbb R)\subseteq \pi_{2}(N)=(0,1)$ which is a contradiction. Now, if $\partial U$ is compact, then $cl \pi_{1} (U)$ is compact in $\Bbb Q$ which is a contradiction (since $int \pi_{1} (U)\neq \emptyset$ ). So, $\Bbb Q\times \Bbb R$ is not $T-$locally compact.
\end{example}

\begin{lemma}\label{11}
Let $X$ be a discrete space and $Y$, a $T-$locally compact space. Then, $X\times Y$ is $T-$locally compact.
\end{lemma}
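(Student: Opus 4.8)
The plan is to exploit the fact that in a discrete space every singleton is clopen, so that a product set of the form $\{x\}\times V$ behaves, as far as closure and interior are concerned, like a parallel copy of $V$. Given a point $(x,y)\in X\times Y$ and an open set $W$ containing it, I would first choose a basic open neighbourhood $U\times V\subseteq W$ with $x\in U$ and $y\in V$; since $X$ is discrete, $\{x\}$ is open, and we still have $\{x\}\times V\subseteq W$.

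Next I would invoke $T-$local compactness of $Y$ at $y$: there is an open set $V'$ in $Y$ with $y\in V'$, $V'\subseteq V$, and $\partial V'$ compact. The candidate neighbourhood of $(x,y)$ in $X\times Y$ is then $\{x\}\times V'$, which is open (as $\{x\}$ is open in $X$ and $V'$ is open in $Y$), contains $(x,y)$, and is contained in $\{x\}\times V\subseteq W$.

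The key computation is the boundary of $\{x\}\times V'$ in the product. Using that the closure of a product is the product of the closures, together with the fact that $\{x\}$ is closed in the discrete space $X$, one gets $cl(\{x\}\times V')=\{x\}\times cl V'$; and since $\{x\}$ and $V'$ are both open, $int(\{x\}\times V')=\{x\}\times V'$. Hence $\partial(\{x\}\times V')=\{x\}\times(cl V'- V')=\{x\}\times\partial V'$, which is homeomorphic to $\partial V'$ and therefore compact. This shows $X\times Y$ is $T-$locally compact at $(x,y)$, and since $(x,y)$ was arbitrary, the lemma follows.

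I do not expect a serious obstacle here; the only point requiring care is the boundary identity $\partial(\{x\}\times V')=\{x\}\times\partial V'$, which rests precisely on $\{x\}$ being clopen. This is exactly where discreteness of $X$ enters, and it is what would fail for a general first factor, consistent with Example \ref{7}, where $\Bbb Q$ is not discrete.
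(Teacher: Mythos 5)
Your proof is correct, and it takes a cleaner route than the paper's. The paper does not shrink the neighbourhood inside $Y$: it picks any open $V\ni y$ with $\partial V$ compact and then uses $(\{x\}\times V)\cap N$ as the candidate neighbourhood, controlling its boundary via the containment $\partial((\{x\}\times V)\cap N)\subseteq(\{x\}\times\partial V)\cap\partial N$. You instead first pass to a basic open box $U\times V\subseteq W$, apply $T-$local compactness of $Y$ to $V$ to get $V'\subseteq V$ with $\partial V'$ compact, and take $\{x\}\times V'$, whose boundary you compute exactly as $\{x\}\times\partial V'$ using that $\{x\}$ is clopen and $cl(A\times B)=cl A\times cl B$. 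This is a genuine improvement in rigor: the general estimate $\partial(A\cap B)\subseteq\partial A\cap\partial B$ that the paper relies on is false for arbitrary open sets (e.g.\ $A=(0,2)$, $B=(1,3)$ in $\Bbb R$ gives $\partial(A\cap B)=\{1,2\}$ while $\partial A\cap\partial B=\emptyset$), and even the correct version $\partial(A\cap B)\subseteq\partial A\cup\partial B$ would not suffice there because $\partial N$ need not be compact. Your exact boundary identity sidesteps this entirely, and your closing remark correctly identifies clopenness of $\{x\}$ as the point where discreteness is used, consistent with Example \ref{7}.
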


\begin{proof}
Let $N$ be an open subset of $X\times Y$ containing $(x,y)$. Then, there exists an open subset $V$ of $Y$ containing $y$ such that $\partial V$ is compact. It is clear that $(\{x\}\times V )\cap N$ is an open subset of $X\times Y$ containing $(x,y)$. Also, $$\partial ((\{x\}\times V )\cap N)\subseteq (\{x\}\times \partial V)\cap \partial N$$
So, $\partial ((\{x\}\times V )\cap N)$ is compact. Hence, $X\times Y$ is $T-$locally compact.
\end{proof}

\begin{theorem}\label{8}
Let $Y$ be a compact space. Then, $X\times Y$ is $T-$locally compact if and only if $X$ is $T-$locally compact.
\end{theorem}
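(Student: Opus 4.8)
The plan is to prove the two directions separately, using the fact (Lemma \ref{5}) that a compact space is $T-$locally compact together with the behavior of boundaries under projections.

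For the ``only if'' direction, suppose $X\times Y$ is $T-$locally compact; I want to show $X$ is $T-$locally compact. The natural idea is that $X$ should embed as a ``slice'' and then invoke the topological-property/subspace results. More precisely, fix $x\in X$ and an open set $U\subseteq X$ containing $x$; then $U\times Y$ is open in $X\times Y$ and contains $(x,y)$ for any $y\in Y$. By hypothesis there is an open $W\subseteq X\times Y$ with $(x,y)\in W\subseteq U\times Y$ and $\partial W$ compact. I would like to produce from $W$ an open $V\subseteq U$ with $x\in V$ and $\partial V$ compact. The candidate is $V=\pi_X(W)$, or perhaps a slightly smaller open set. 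The key computation is to relate $\partial \pi_X(W)$ to $\pi_X(\partial W)$: since $Y$ is compact, the projection $\pi_X$ is a closed map, so $\pi_X(cl\,W)$ is closed and hence $cl\,\pi_X(W)\subseteq \pi_X(cl\,W)$, giving $\partial\pi_X(W)=cl\,\pi_X(W)-\pi_X(W)\subseteq \pi_X(cl\,W)-\pi_X(W)\subseteq \pi_X(cl\,W - W)=\pi_X(\partial W)$ (the last inclusion needs a small argument, or one uses that a point of $\pi_X(cl\,W)\setminus\pi_X(W)$ has its whole fibre outside $W$, so any preimage in $cl\,W$ lies in $\partial W$). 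Thus $\partial\pi_X(W)$ is a closed subset of the compact set $\pi_X(\partial W)$, hence compact, and $\pi_X(W)\subseteq U$, $x\in\pi_X(W)$, which is exactly what is needed.

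For the ``if'' direction, suppose $X$ is $T-$locally compact; since $Y$ is compact it is $T-$locally compact by Lemma \ref{5}. Given an open $N\subseteq X\times Y$ containing $(x,y)$, I first use the tube lemma: because $Y$ is compact, there is an open $U\subseteq X$ with $x\in U$ and $\{x\}\times Y\subseteq U\times Y\subseteq N$. Then apply $T-$local compactness of $X$ to get an open $V\subseteq U$ with $x\in V$ and $\partial V$ compact. The candidate neighbourhood of $(x,y)$ inside $N$ is $V\times Y$. One computes $\partial(V\times Y)=(\partial V\times Y)\cup(cl\,V\times\partial Y)=\partial V\times Y$ since $\partial Y=\emptyset$, and $\partial V\times Y$ is compact as a product of compact sets (Tychonoff for two factors). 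Since $V\times Y\subseteq U\times Y\subseteq N$ and $(x,y)\in V\times Y$, this shows $X\times Y$ is $T-$locally compact at $(x,y)$.

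I expect the main obstacle to be the ``only if'' direction, specifically the boundary inclusion $\partial\pi_X(W)\subseteq\pi_X(\partial W)$: it relies essentially on $\pi_X$ being a closed map (which is where compactness of $Y$ enters) and on a short fibre argument to handle the set difference. The ``if'' direction is more routine, the only non-formal ingredient being the tube lemma, again a consequence of compactness of $Y$. It may be cleanest to isolate the closedness of $\pi_X$ and the tube lemma as the two standard facts about projections from a compact factor and then assemble both directions from them.
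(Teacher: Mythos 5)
Your ``only if'' half is correct and is essentially the paper's own argument: compactness of $Y$ makes $\pi_{1}$ a closed map, whence $cl\,\pi_{1}(W)\subseteq\pi_{1}(cl\,W)$ and $\partial\pi_{1}(W)\subseteq\pi_{1}(\partial W)$, so $\partial\pi_{1}(W)$ is a closed subset of a compact set. You even supply the small fibre argument for the inclusion $\pi_{1}(cl\,W)\setminus\pi_{1}(W)\subseteq\pi_{1}(\partial W)$ that the paper leaves implicit. The only quibble is that one needs $Y\neq\emptyset$ to pick the point $y$.

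The ``if'' half, however, has a genuine gap at its very first step. The tube lemma produces an open $U$ with $U\times Y\subseteq N$ only when $N$ contains the entire slice $\{x\}\times Y$; here $N$ is merely an open set containing the single point $(x,y)$, so no such tube need exist (take $N=U_{0}\times W_{0}$ with $W_{0}$ a proper open subset of $Y$: it contains no set of the form $U\times Y$). Your candidate $V\times Y$ therefore need not lie inside $N$, and the direction collapses. Worse, the gap cannot be repaired, because this implication is false as stated: $\mathbb{Q}$ is $T-$locally compact (Example \ref{2}) and $[0,1]$ is compact, yet $\mathbb{Q}\times[0,1]$ is not $T-$locally compact. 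Indeed, let $N=\mathbb{Q}\times[0,1/2)$ and let $M\subseteq N$ be any nonempty open set. For each $x\in\pi_{1}(M)$ the fibre $\{x\}\times[0,1]$ is connected, meets $M$, and is not contained in $M$ (since $M\subseteq N$), so it must meet $\partial M$; hence $\pi_{1}(M)\subseteq\pi_{1}(\partial M)$. If $\partial M$ were compact, then $\pi_{1}(\partial M)$ would be a compact, hence nowhere dense, subset of $\mathbb{Q}$ containing the nonempty open set $\pi_{1}(M)$ --- a contradiction. (This is exactly the mechanism of the paper's Example \ref{7}.) For what it is worth, the paper's own proof of this direction is also defective: it rests on the false inclusion $\partial(A\cap B)\subseteq\partial A\cap\partial B$ and on the compactness of $\partial N$, which is not given. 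So the obstacle you met is not a defect of your particular route; the implication itself fails.
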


\begin{proof}
First, suppose that $X\times Y$ be $T-$locally compact. Let $U$ be an open subset of $X$ containing $x$. The, $U\times Y$ is an open set in $X\times Y$ containing $(x,y)$ for some $y\in Y$. So, there exists an open subset $N$ of $X\times Y$ containing $(x,y)$ such that $\partial N$ is compact and $N\subseteq X\times Y$. Since $Y$ is compact, $\pi_{1}$ is a closed map. Hence, $\partial \pi_{1}(N)\subseteq \pi_{1}(\partial N)$. So, $\partial \pi_{1}(N)$ is compact and $\pi_{1}(N)\subseteq U$. Conversely, Let $N$ be an open subset of $X\times Y$ containing $(x,y)$. Then, $\pi_{1}(N)$ is an open set in $X$ containing $x$. Since $X$ is $T-$locally compact, there exists an open set $U$ of $X$ containing $x$ such that $\partial U$ is compact. Clearly, $(x,y)\in (U\times Y)\cap N\neq \emptyset$. Since $\partial ((U\times Y)\cap N)\subseteq (\partial U\times Y)\cap \partial N$, so $(U\times Y)\cap N$ is an open set containing $(x,y)$ such that $\partial ((U\times Y)\cap N)$ is compact. It shows that $X\times Y$ is $T-$locally compact.
\end{proof}

\begin{corollary}
Let $X$ and $Y$ be two Hausdorff spaces. If $X\times Y$ is $T-$locally compact, then $X$ and $Y$ are $T-$locally compact.
\end{corollary}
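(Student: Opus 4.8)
The plan is to reduce the corollary to Theorem~\ref{8} by exhibiting, inside each of $X$ and $Y$, a compact neighbourhood-like set that lets us invoke the ``only if'' direction of that theorem locally. The obstacle is that Theorem~\ref{8} as stated requires the \emph{whole} second factor to be compact, whereas here $X$ and $Y$ are merely Hausdorff; so the first task is to manufacture compactness on a neighbourhood and transfer the conclusion.

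First I would fix $x\in X$ and an open set $U\subseteq X$ with $x\in U$. Pick any $y\in Y$; then $U\times Y$ is open in $X\times Y$ and contains $(x,y)$, so by $T-$local compactness of $X\times Y$ there is an open set $N\ni(x,y)$ with $N\subseteq U\times Y$ and $\partial N$ compact. Now $\partial N$ is a compact subset of the Hausdorff space $X\times Y$, hence closed; consider its image $K=\pi_{2}(\partial N)\subseteq Y$, which is compact, and also $\pi_{1}(\partial N)\subseteq U$ is compact. The idea is that $N$ ``lives over'' a compact part of $Y$ in a controlled way: more precisely I would work inside the subspace $X\times K'$ for a suitable compact $K'\subseteq Y$ containing both $y$ and $K$, but since an arbitrary compact Hausdorff space need not contain a compact neighbourhood of a point, the cleaner route is different.

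The cleaner route I would actually take: show directly that $\partial\pi_{1}(N)$ is compact. We have $\pi_{1}(N)$ open in $X$ (projections are open) with $x\in\pi_{1}(N)\subseteq U$. I claim $\partial\pi_{1}(N)\subseteq\pi_{1}(\partial N)$. Indeed, if $a\in\partial\pi_{1}(N)=cl\,\pi_{1}(N)-\pi_{1}(N)$, take any $b\in Y$; I want to produce a point of $\partial N$ over $a$. Since $a\notin\pi_{1}(N)$, the fibre $\{a\}\times Y$ misses $N$, so $\{a\}\times Y\subseteq (X\times Y)-N\subseteq cl((X\times Y)-N)$. On the other hand $a\in cl\,\pi_{1}(N)$ forces $(\{a\}\times Y)\cap cl\,N\neq\emptyset$ once we also use that $\{a\}\times Y$ is compact (here $Y$ Hausdorff is not even needed, only that the fibre is a full vertical slice and $a$ is a limit of the projection): pick $a_n\to a$ (or a net) with $a_n\in\pi_{1}(N)$ and corresponding $b_n$ with $(a_n,b_n)\in N$; by compactness of some vertical slice through the relevant points we extract a cluster point $(a,c)\in cl\,N$, and since $(a,c)\notin N$ we get $(a,c)\in\partial N$, so $a\in\pi_{1}(\partial N)$. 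Hence $\partial\pi_{1}(N)\subseteq\pi_{1}(\partial N)$, a continuous image of a compact set, so it is compact; thus $\pi_{1}(N)$ is the required open set showing $X$ is $T-$locally compact at $x$. By symmetry $Y$ is $T-$locally compact.

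The step I expect to be the main obstacle is the claim $\partial\pi_{1}(N)\subseteq\pi_{1}(\partial N)$: projections are not closed maps in general, so the inclusion $cl\,\pi_{1}(N)\subseteq\pi_{1}(cl\,N)$ can fail, and one must genuinely use the vertical-slice structure together with a compactness argument to extract the limit point over $a$. I would be careful to phrase this with nets (or filters), since $Y$ need not be first countable, and to note explicitly where Hausdorffness of the product (to get $\partial N$ closed, hence compact) and of $Y$ are used. Everything after that inclusion is routine: continuous image of compact is compact, open subsets map to open subsets under $\pi_{1}$, and $\pi_{1}(N)\subseteq\pi_{1}(U\times Y)=U$.
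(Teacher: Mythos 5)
Your argument has a genuine gap at exactly the step you flagged: the inclusion $\partial\pi_{1}(N)\subseteq\pi_{1}(\partial N)$ is false in general, and the net argument you sketch for it cannot be repaired. To pass from a net $(a_{\alpha},b_{\alpha})\in N$ with $a_{\alpha}\to a$ to a cluster point $(a,c)\in cl\,N$ you need the second coordinates $b_{\alpha}$ to cluster somewhere in $Y$; no vertical slice $\{a\}\times Y$ or $\{a_{\alpha}\}\times Y$ is compact unless $Y$ itself is, and indeed $\pi_{1}:X\times Y\to X$ is a closed map precisely when $Y$ is compact, which is the hypothesis of Theorem \ref{8} that you are trying to do without. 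Concretely, take $X=\Bbb Q$ and $Y=\Bbb N$ discrete (both Hausdorff, and $X\times Y$ is $T-$locally compact by Example \ref{2} and Lemma \ref{11}, so this is a legitimate instance). Choose irrationals $\alpha_{k}\downarrow 0$ and let
$$N=\bigcup_{k\geq 1}\bigl((\alpha_{k+1},\alpha_{k})\cap\Bbb Q\bigr)\times\{k\}.$$
Each slice $(\alpha_{k+1},\alpha_{k})\cap\Bbb Q$ is clopen in $\Bbb Q$, and a convergent net in $\Bbb Q\times\Bbb N$ is eventually in a single slice, so $N$ is clopen and $\partial N=\emptyset$ is compact; yet $\pi_{1}(N)=(0,\alpha_{1})\cap\Bbb Q$ has $\partial\pi_{1}(N)=\{0\}\not\subseteq\emptyset=\pi_{1}(\partial N)$. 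A variant of the same construction (spreading infinitely many clopen slices over an unbounded part of $\Bbb Q$) even makes $\partial\pi_{1}(N)$ non-compact while $\partial N=\emptyset$, so $\pi_{1}(N)$ genuinely fails to be the required witness: the set $N$ handed to you by $T-$local compactness of the product cannot simply be projected.

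The fix is to cut down to a compact second factor \emph{before} projecting, which is what the paper does. Since $Y$ is Hausdorff, $\{y\}$ is closed, so $X\times\{y\}$ is a closed subspace of $X\times Y$ and is $T-$locally compact by Lemma \ref{6} (via the boundary computation of Remark \ref{12}); then Theorem \ref{8}, applied with the compact space $\{y\}$ as second factor, gives that $X$ is $T-$locally compact. Your instinct to ``manufacture compactness on a neighbourhood'' was the right one, but the compactness has to be imposed on the second factor by restricting to the slice $X\times\{y\}$, not extracted from $\partial N$ after the fact.
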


\begin{proof}
Let $y\in Y$. By Lemma \ref{6}, $X\times \{y\}$ is $T-$locally compact. Hence, by Theorem \ref{8}, $X$ is $T-$locally compact. Similarly, $Y$ is $T-$locally compact.
\end{proof}

\begin{corollary}
Let $\{X_{i};i\in I\}$ be an arbitrary family of Hausdorff spaces. If $\prod_{i}X_{i}$ is a $T-$locally compact space, then each $X_{i}$ is $T-$locally compact.
\end{corollary}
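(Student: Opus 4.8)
The plan is to reduce this to the closed-subspace result (Lemma \ref{6}) together with the fact that $T-$locally compactness is a topological property (Theorem \ref{9}), rather than trying to invoke Theorem \ref{8} directly: that route fails in the infinite case because, having fixed one index $j$, the complementary product $\prod_{i\neq j}X_{i}$ need not be compact, so $\prod_{i}X_{i}$ does not present itself as a product of $X_{j}$ with a compact space. Instead I would exhibit a copy of $X_{j}$ as a \emph{closed} subspace of $\prod_{i}X_{i}$.

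Concretely, fix $j\in I$ and choose a point $p=(p_{i})_{i\in I}\in\prod_{i\in I}X_{i}$. Consider the slice
$$S=\{(x_{i})_{i\in I}\in\textstyle\prod_{i\in I}X_{i}\ :\ x_{i}=p_{i}\ \text{ for all } i\neq j\}.$$
The first step is to check that $S$ is closed in $\prod_{i}X_{i}$: since each $X_{i}$ is Hausdorff it is $T_{1}$, so $\{p_{i}\}$ is closed in $X_{i}$ for every $i\neq j$, hence each $\pi_{i}^{-1}(\{p_{i}\})$ is closed, and $S=\bigcap_{i\neq j}\pi_{i}^{-1}(\{p_{i}\})$ is an intersection of closed sets. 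By Lemma \ref{6}, the subspace $S$ of the $T-$locally compact space $\prod_{i}X_{i}$ is itself $T-$locally compact. The second step is to identify $S$ with $X_{j}$: the map $\varphi:X_{j}\to S$ sending $x$ to the point whose $j$th coordinate is $x$ and whose remaining coordinates are the $p_{i}$ is a bijection, it is continuous because each composite $\pi_{i}\circ\varphi$ is the identity (if $i=j$) or a constant (if $i\neq j$), and its inverse is the restriction $\pi_{j}|_{S}$, which is continuous. Thus $\varphi$ is a homeomorphism, and Theorem \ref{9} gives that $X_{j}$ is $T-$locally compact. Since $j\in I$ was arbitrary, every $X_{i}$ is $T-$locally compact.

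I do not expect a serious obstacle: the entire content is the observation that a one-coordinate slice of a product is a homeomorphic copy of one factor, and that this copy is closed as soon as the other factors are $T_{1}$. The only place the Hausdorff hypothesis is used is the closedness of $S$; without it the slice need not be closed (cf.\ the pathology in Example \ref{3}) and Lemma \ref{6} would not apply. The lone degenerate point to keep in mind is that a point $p$ must be available, i.e.\ $\prod_{i}X_{i}\neq\emptyset$, which is automatic here unless some $X_{i}$ is empty.
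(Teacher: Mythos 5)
Your argument is correct, and it fills a gap: the paper states this corollary without any proof at all. The natural template is the paper's proof of the preceding two-factor corollary, which also realizes a factor as a closed slice ($X\times\{y\}$, closed because $Y$ is Hausdorff) and invokes Lemma \ref{6}; your construction of $S=\bigcap_{i\neq j}\pi_{i}^{-1}(\{p_{i}\})$ is exactly the right generalization, and you are right that Theorem \ref{8} cannot be applied to $\prod_{i}X_{i}$ directly since $\prod_{i\neq j}X_{i}$ need not be compact. The one place you diverge from the paper's pattern is the final step: the paper would presumably identify the slice with $X_{j}\times(\text{point})$ and quote Theorem \ref{8} with the compact factor a singleton, whereas you write down the homeomorphism $\varphi:X_{j}\to S$ explicitly and quote Theorem \ref{9}. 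These are essentially the same move --- either way one must verify that $\pi_{j}|_{S}$ is a continuous inverse --- but your version is the more honest one, since applying Theorem \ref{8} to $S$ already presupposes the identification $S\cong X_{j}\times\{\ast\}$ that your $\varphi$ makes explicit. Your remark about the degenerate case $\prod_{i}X_{i}=\emptyset$ is also well taken: if some $X_{i}$ is empty the product is vacuously $T$-locally compact while the other factors are unconstrained, so the statement implicitly assumes all factors are nonempty; the paper does not address this either.
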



\begin{thebibliography}{10}

\bibitem{B}
  N.~Bourbaki, \emph {Elements of mathematics: General topology}, Springer Verlag , Chapters 1-4, Berlin 1995.



\end{thebibliography}
\end{document}